\numberwithin{equation}{section}
\newtheorem{thm}{Theorem}[section]
\newtheorem{lem}{Lemma}[section]
\newtheorem{rem}{Remark}[section]
\newtheorem{prop}{Proposition}[section]
\newcounter{marnote}
\begin{document}
\title[Monge-Amp\`{e}re equation]{Global solutions and exterior Dirichlet problem for Monge-Amp\`{e}re equation in $\mathbb R^2$} \subjclass{35J96; 35J67}
\keywords{Monge-Amp\`ere equation, Dirichlet problem, a priori estimate, maximum principle, viscosity solution}
\author{Jiguang Bao}
\address{School of Mathematical Sciences\\
Beijing Normal University, Laboratory of Mathematics and Complex Systems\\
Ministry of Education \\
Beijing 100875, China}
\email{jgbao@bnu.edu.cn}

\author{Haigang Li}
\address{School of Mathematical Sciences\\
Beijing Normal University, Laboratory of Mathematics and Complex Systems\\
Ministry of Education \\
Beijing 100875, China}
\email{hgli@bnu.edu.cn}

\author{Lei Zhang}
\address{Department of Mathematics\\
        University of Florida\\
        358 Little Hall P.O.Box 118105\\
        Gainesville FL 32611-8105}
\email{leizhang@ufl.edu}
\thanks{Bao was partially supported by Beijing Municipal Commission of Education for the Supervisor of Excellent Doctoral Dissertation (20131002701), NSFC (11371060) and the Fundamental Research Funds for the Central Universities. Li was partially supported by NSFC (11201029), (11371060) and the Fundamental Research Funds for the Central Universities. }

\date{\today}

\begin{abstract} Monge-Amp\`ere equation $\det(D^2u)=f$ in two dimensional spaces is different in nature from their counterparts in higher dimensional spaces. In this article we employ new ideas to establish two main results for the Monge-Amp\`ere equation defined either globally in $\mathbb R^2$ or outside a convex set. First we prove the existence of a global solution that satisfies a prescribed asymptotic behavior at infinity, if $f$ is asymptotically close to a positive constant. Then we solve the exterior Dirichlet problem if data are given on the boundary of a convex set and at infinity.
\end{abstract}

\maketitle

\section{Introduction}

The aim of this article is to study convex, viscousity solutions of
\begin{equation}\label{main-eq0}
\det(D^2u)=f
\end{equation}
 either globally defined in $\mathbb R^2$ or defined outside a convex set.

The research of global solutions dates back to 1950s. A classical result of J\"orgens (for $n=2$ \cite{jorgens}), Calabi ($n\le 5$ \cite{calabi}), and Pogorelov ($n\ge 2$, \cite{pogorelov1}) states that any classical convex solution of
$$ \det(D^2 u)=1, \quad \mbox{in}~~ \mathbb R^n $$
is a quadratic polynomial. Another proof in the line of affine geometry was given by Cheng-Yau \cite{cheng-yau2}. Caffarelli \cite{caf-liou} gave a proof for viscosity solutions.

If (\ref{main-eq0}) is defined outside a strictly convex, bounded subset in $\mathbb R^n$ and $f\equiv 1$, Caffarelli-Li \cite{caf-li1} proved that the solution $u$ is asymptotically close to a quadratic polynomial at infinity for $n\ge 3$. Similarly for $n=2$ and $f\equiv 1$, using complex analysis Ferrer-Mart\`inez-Mil\'an \cite{FMM1,FMM2} and Delano\"e \cite{delanoe} proved that $u$ is asymptotically close to a quadratic polynomial plus a logarithmic term.

These asymptotics results were extended by the authors in \cite{bao-li-z1} for $f$ being a perturbation of $1$ at infinity. Namely, for $n\ge 3$ and $f$ being an optimal perturbation of $1$, $u$ is asymptotically close to a quadratic polynomial at infinity. For $n=2$ and $f$ being the optimal perturbation of $1$, $u$ is close to a quadratic polynomial plus a logarithmic term at infinity.

Two natural questions are related to the asymptotic behavior of $u$ at infinity. First, given a prescribed asymptotic behavior at infinity, can one find a global solution $u$ that satisfies the asymptotic behavior?  The second question is: Let $D$ be an open, bounded, strictly convex subset of $\mathbb R^n$ with smooth boundary. Given $\phi\in C^{2}(\partial D)$ and a prescribed asymptotic behavior of $u$ at infinity, can one find $u$ of (\ref{main-eq0}) defined in $\mathbb R^n\setminus D$ that satisfies the boundary data at $\partial D$ and infinity?

These questions for $n\ge 3$ are solved in \cite{caf-li1} for $f\equiv 1$ and \cite{bao-li-z1} for $f$ being a perturbation of $1$. However for $n=2$, all the approaches used for higher dimensional cases failed.  The purpose of this article is to employ a new method that solves the existence of global solution for (\ref{main-eq0}) in $\mathbb R^2$ and a corresponding exterior Dirichlet problem.

First we consider convex viscosity solutions of
\begin{equation}\label{main-eq}
\det(D^2u)=f,\quad \mbox{in }~~ \mathbb R^2,
\end{equation}
where we assume $f$ to satisfy
\begin{equation}\label{af}
\begin{cases}
\dfrac 1{c_0}\le f(x)\le c_0,\quad \forall x\in \mathbb R^2,\\
\left|D^j(f(x)-1)\right|\le \dfrac{c_0}{ (1+|x|)^{\beta+j}},\quad j=0,1,..,k, ~~ \forall x\in \mathbb R^2,
\end{cases}
\end{equation}
for some $c_0>0$, $\beta>2$ and $k\ge 3$.
\begin{rem} The assumption $\beta>2$ in (\ref{af}) is sharp, as the readers may see counter examples in the authors' previous work \cite{bao-li-z1}.
\end{rem}

Let $\mathbb M^{2\times 2}$ be the set of the real valued, $2\times 2$ matrices and
$$\mathcal{A}:=\left\{A\in \mathbb M^{2\times 2}:~~ A \mbox{ is symmetric, positive definite and }~ \det(A)=1 \, \right\}. $$
Our first main theorem is on the existence of global solution with prescribed asymptotic behavior at infinity:

\begin{thm}\label{thm1} Suppose (\ref{af}) holds for $f$. Given $A\in \mathcal{A}$, $b\in \mathbb R^2$ and $c\in\mathbb{R}$, there exists $\epsilon_0(A,c_{0})>0$ such that
if
\begin{equation}\label{af-2}
\left|D^m\left(f\Big(\sqrt{A}^{-1}y\Big)-\fint_{\partial B(0,|y|)}f\Big(\sqrt{A}^{-1}x\Big)dS\right)\right|\le \epsilon_0, \quad \forall y\in \mathbb R^2, \quad m=0,1,
\end{equation}
then there exists a unique solution $u$ to
(\ref{main-eq}) satisfying
\begin{equation}\label{asy-1}
\limsup_{|x|\to \infty}|x|^{j+\sigma}\bigg |D^j \left(u(x)-\Big(\frac 12 x'Ax+b\cdot x+d\log \sqrt{x'Ax}+c\Big) \right) \bigg |<\infty
\end{equation}
for $j=0,1,..,k+1$, $\sigma\in (0,\min\{\beta-2,2\})$ and $d=\frac 1{2\pi}\int_{\mathbb R^2}(f-1)$.
\end{thm}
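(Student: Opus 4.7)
The plan has three main stages: normalize by affine change of variables, build an explicit radial ``base solution'' that realizes the target quadratic-plus-logarithmic profile, and solve for the remainder by a fixed-point argument in a weighted Hölder space. First, setting $y = A^{1/2}x$ and subtracting the affine part $A^{-1/2}b\cdot y + c$, the identity $\det A = 1$ reduces the problem to finding $\tilde u$ with $\det(D^2\tilde u) = \tilde f(y) := f(A^{-1/2}y)$ in $\mathbb R^2$ and asymptotic $\tilde u(y) = \tfrac12|y|^2 + d\log|y| + O(|y|^{-\sigma})$. Hypothesis (\ref{af-2}) translates directly to: the non-radial part $\tilde f(y) - \bar f(|y|)$, where $\bar f(r) := \fint_{\partial B_r}\tilde f$, is globally $\epsilon_0$-small in $C^1$.

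Next, exploit the fact that for a radial function $w(r)$ the Monge-Amp\`ere equation $\det(D^2 w) = \bar f$ reduces to the ODE $w'(r) w''(r)/r = \bar f(r)$, which integrates explicitly to $(w_0'(r))^2 = 2\int_0^r s\bar f(s)\,ds$. Splitting off the constant $1$ and using $\beta > 2$, I obtain a radial base
$$w_0(r) = \tfrac12 r^2 + d\log r + \mathrm{const} + O(r^{-\beta+2}), \qquad d = \tfrac{1}{2\pi}\int_{\mathbb R^2}(f-1),$$
with matching derivative estimates up to order $k+1$ obtained by differentiating the ODE and invoking (\ref{af}). Thus $w_0$ alone already realizes the desired asymptotic profile up to an error of higher decay order.

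Writing $\tilde u = w_0 + v$, the equation for $v$ becomes
$$L v := \operatorname{tr}\bigl((\operatorname{cof} D^2 w_0) D^2 v\bigr) = (\tilde f - \bar f) - N(D^2 v),$$
where $N$ collects the quadratic and higher Taylor terms of the Monge-Amp\`ere operator around $w_0$. The coefficients of $L$ converge to the identity at infinity, so $L$ is a small perturbation of $\Delta$ outside a large ball. The forcing $\tilde f - \bar f$ is globally $\epsilon_0$-small by (\ref{af-2}) and inherits decay $|y|^{-\beta}$ from (\ref{af}). With a weighted Hölder space adapted to decay rate $\sigma \in (0,\min\{\beta-2,2\})$, and a linear solvability estimate of the form $\|v\|_{C^{k+1,\alpha}_\sigma}\lesssim \|g\|_{C^{k-1,\alpha}_{\sigma+2}}$ for $L v = g$, a contraction-mapping argument in a ball of radius $C\epsilon_0$ produces the required $v$. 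Uniqueness within (\ref{asy-1}) then follows from a maximum-principle argument: two solutions differ by a decaying function satisfying a homogeneous linear equation whose ellipticity is guaranteed by strict convexity and $f \ge 1/c_0$.

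The chief obstacle is the weighted solvability theory for $L$ in dimension two. The Laplacian in $\mathbb R^2$ does not admit decaying solutions to $\Delta v = g$ unless $\int g = 0$, because its fundamental solution is logarithmic; this is exactly why the higher-dimensional approaches of \cite{caf-li1, bao-li-z1} collapse here. The choice of $d = \tfrac{1}{2\pi}\int(f-1)$ is engineered to absorb the entire ``logarithmic mass'' of the source into $w_0$, so that the residual $\tilde f - \bar f$ has vanishing spherical mean at every radius, for which a decaying $L^{-1}g$ does exist. A subtlety is that the nonlinear term $N(D^2 v)$ need not have zero radial mean, so the fixed-point scheme may require allowing a small correction to $d$ (or to a sub-leading logarithmic coefficient) as an additional unknown, adjusted at each iteration to kill the offending mean. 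Executing this fine asymptotic analysis rigorously in weighted spaces while propagating derivative estimates up to order $k+1$ is the technical heart of the argument.
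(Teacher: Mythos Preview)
Your architecture --- affine normalization, explicit radial base $w_0$, then iterate for the remainder $v$ via the linearization $L$ --- is exactly the paper's. The difference, and the gap, lies in the step you label ``linear solvability estimate of the form $\|v\|_{C^{k+1,\alpha}_\sigma}\lesssim \|g\|_{C^{k-1,\alpha}_{\sigma+2}}$ for $Lv=g$.'' In two dimensions this is not an off-the-shelf result; it is the entire content of the proof, and the paper devotes a separate proposition and lemma to it. Two specific devices are needed, neither of which appears in your sketch.

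First, for the forcing $\tilde f - \bar f$, the paper does not merely use that its spherical means vanish; it proves a sharp gradient bound on the Green's function of $L$, namely $|D_yG(x,y)|\le C\log|x|/|x|$ for $|y|<|x|/2$ (Lemma~\ref{easy-lem-1}). This is obtained by first applying the Cordes--Nirenberg $C^{1,1/2}$ estimate on annuli where the coefficients are $\delta_0$-close to the identity, and then pushing the bound inward over the ``bad'' region near the origin by the maximum principle applied to $\partial_{y_i}G$. With this gradient bound one writes $\int_{E_1}G(x,y)(\tilde f-\bar f)\,dy=\int_{E_1}(G(x,y)-G(x,0))(\tilde f-\bar f)\,dy$ and gains the extra power of $|x|^{-1}$ needed for decay. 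Without this lemma, the weighted estimate you assume simply fails: the Kenig--Ni bound $|G|\lesssim |\log|x-y||$ alone produces only a logarithmically growing $\phi^0$.

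Second, your worry that the nonlinear term $N(D^2v)=\det(D^2v)$ ``need not have zero radial mean, so the fixed-point scheme may require allowing a small correction to $d$'' is resolved in the paper by a structural observation you are missing: $\det(D^2\phi)=\partial_1(\partial_1\phi\,\partial_{22}\phi)-\partial_2(\partial_1\phi\,\partial_{12}\phi)$ is a divergence. Integrating by parts moves a derivative onto $G$, and the same gradient bound above (plus the rapid decay of $D\phi\cdot D^2\phi$) yields decay without any mean condition and without adjusting $d$. So no extra unknown is needed; the iteration closes as stated once these two ingredients are in hand.
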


\begin{rem} It is easy to observe that (\ref{af-2}) follows from (\ref{af}) if $|y|$ is large. On the other hand $f_{1}(x):=f\Big(\sqrt{A}^{-1}x\Big)$ could be very different from $1$ when $|x|$ is not large, even though it is very close to a radial function.
\end{rem}

Throughout the article we shall use $B(x_0,r)$ to denote the disk centered at $x_0$ with radius $r$. If $x_0$ is the origin we may use $B_r$.

If the dimension is higher than $2$, the analogue of Theorem \ref{thm1} can be proved using a standard upper-lower solutions method: In order to find a global solution of $\det(D^2u)=f$ for $f$ close to $1$ at infinity, one can solve for
$\det(D^2 u_R)=\bar f$ and $\det(D^2 U_R)=\underline{f}$ in $B_R$, where $\bar f$ and $\underline{f}$ are radial functions greater than $f$ and smaller than $f$ respectively. Both  $\underline{f}$ and $\bar f$ are close to $1$ at infinity and the difference between $u_R$ and $U_R$ is only $O(1)$ if they take the same value on $\partial B_R$. Thus it is easy to obtain a  global solution of $\det(D^2u)=f$ in $\mathbb R^n$ by a sequence of local solutions.
However for $n=2$, such a process is completely destroyed by a logarithmic term. In order for a limiting process to work, it is crucial to obtain a point-wise, uniform estimate for the Hessian matrix of a sequence of approximating solutions. Because of the logarithmic term, the shapes of certain level sets cannot be determined and almost all estimates that work so well for higher dimensional equations fail.

The proof of Theorem \ref{thm1} is as follows. First we look for a radial solution of $\det(D^2u)=\tilde f_1(\,r)$, where $\tilde{f}_1(\,r):=\fint_{\partial{B}_{r}}f_{1}(x)dS$, and take this solution as the first term in our approximation. As we look for more terms down the road we treat the additional terms as solutions to the linearized equation of the Monge-Amp\`ere equation expanded at the radial solution. In order to make all the additional terms proportionally smaller, we need to use the structure of Monge-Amp\`ere equation and a sharp estimate of the Green's function corresponding to the linearized equation. Standard estimates for
Green's functions are not enough for our purpose because the iteration process requires a very sharp form. What makes it worse is the ellipticity of the linearized equation could be very bad near the origin, since $f_1$ could be very different from $1$ near the origin. The proof in Lemma \ref{easy-lem-1}, which relies heavily on results of Kenig-Ni and Cordes-Nirenberg for $n=2$, overcomes this difficulty by estimating the Green's function over ``good regions" first and then use the maximum principle to control the ``bad region".

The second main theorem is on the exterior Dirichlet problem proposed in the previous work of the authors \cite{bao-li-z1}.
We look to solve the following exterior Dirichlet problem: Let $D$ be a bounded, strictly convex set with smooth boundary in $\mathbb R^2$.
Suppose $\varphi\in C^2(\partial D)$ and $u$ is a solution of
\begin{equation}\label{extdir}
\left\{\begin{array}{ll}
\det(D^2u)=f(x),\quad \mbox{in }~~ \mathbb R^2\setminus \overline{D}, \\\\
u\in C^0(\mathbb R^2\setminus D) \mbox{ is a locally convex viscosity solution},\\\\
u=\varphi(x),\qquad \qquad\mbox{on }~~ \partial D.
\end{array}
\right.
\end{equation}

In \cite{bao-li-z1} we conjectured that for any $\varphi\in C^2(\partial D)$, as long as
 $$ d>\frac 1{2\pi}\int_{\mathbb R^2\setminus D}(f-1)-\frac 1{2\pi} \mbox{ area}(D), $$
 there is always a locally convex solution to
 $$
 \begin{cases}
 \det(D^2u)=f(x),\quad \mbox{in }~~ \mathbb R^2\setminus \overline{D}, \\
 u=\varphi(x), \qquad \qquad\mbox{ on }~~ \partial D, \\
 \limsup\limits_{|x|\to \infty}|x|^{j+\sigma}\bigg | D^j\left(u(x)-\Big(\frac 12x'Ax+b\cdot x+d\log \sqrt{x'Ax}+c_d\Big)\right)\bigg |<\infty
\end{cases}
$$
 for $j=0,1,...,k$ ($k\ge 3$), $\sigma\in (0,\min\{\beta-2,2\})$, $c_d\in \mathbb R$ is uniquely determined,
 where $\varphi$ is a given smooth function on $\partial D$, $A\in \mathcal{A}$, $b\in \mathbb R^2$.

Because of the
additional assumption (\ref{af-2}) we are not able to prove this conjecture for arbitrary convex domain $D$. However since we are using a new approach we can weaken the assumption of $\phi$ to being H\"older continuous:

\begin{thm}\label{thm2}
 Let $r_0>0$, $\phi\in C^{\alpha}(\partial B_{r_0})$ for some $\alpha\in (0,1)$ and $f$ satisfy (\ref{af}).
Then for any $d>\frac 1{2\pi}\int_{\mathbb R^2\setminus B_{r_0}}(f-1)-\frac{1}{2}r_0^2$, there exists $\epsilon_0(r_0,d, \alpha)>0$ such that if (\ref{af-2}) holds for $f$ and
$$\sup_{x,y\in \partial B_{r_0}}\frac{|\phi(x)-\phi(y)|}{|x-y|^{\alpha}}\le \epsilon_0, $$
a unique $u$ to (\ref{extdir}) exists ( for $D=B_{r_0}$) and satisfies
\begin{equation}\label{asy-2}
\limsup_{|x|\to \infty}|x|^{j+\sigma}\bigg |D^j\Big(u(x)-(\frac 12 |x|^2+d\log |x|+c_d)\Big) \bigg |<\infty
\end{equation}
for $j=0,..,k+1$ and $\sigma\in (0,\min\{\beta-2,2\})$, $c_d\in \mathbb R$ is uniquely determined by $\phi,d,f$ and $r_0$.
\end{thm}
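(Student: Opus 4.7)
The plan is to reduce Theorem \ref{thm2} to the global existence result Theorem \ref{thm1} through an extension of $f$ inside $B_{r_{0}}$, and then to match the boundary data $\phi$ via a Perron-type sup bracketed by translates of a reference global solution.

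First, I would extend $f$ from $\mathbb R^{2}\setminus\overline{B_{r_{0}}}$ to a smooth positive $\tilde f$ on $\mathbb R^{2}$ so that $\int_{\mathbb R^{2}}(\tilde f-1)\,dx=2\pi d$, that is,
\[
\int_{B_{r_{0}}}(\tilde f-1)\,dx=2\pi d-\int_{\mathbb R^{2}\setminus B_{r_{0}}}(f-1)\,dx.
\]
The prescribed lower bound on $d$ is exactly the condition $\int_{B_{r_{0}}}\tilde f>0$, so such a positive extension exists; by making $\tilde f$ close to its own radial average on all of $\mathbb R^{2}$, one can arrange that $\tilde f$ satisfies both (\ref{af}) and (\ref{af-2}) after an inessential enlargement of $c_{0}$ and shrinkage of $\epsilon_{0}$. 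Theorem \ref{thm1}, applied with $A=I$ and $b=0$, then produces a global convex $C^{k+1}$ solution $U$ of $\det(D^{2}U)=\tilde f$ on $\mathbb R^{2}$ with
\[
U(x)=\tfrac{1}{2}|x|^{2}+d\log|x|+c^{*}+O(|x|^{-\sigma})
\]
together with the matching derivative estimates. Since $\tilde f=f$ outside $B_{r_{0}}$, each translate $U+C$ is itself an exact solution of $\det(D^{2}u)=f$ in $\mathbb R^{2}\setminus\overline{B_{r_{0}}}$.

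Next, fix $M>\|\phi\|_{L^{\infty}}+\sup_{\partial B_{r_{0}}}|U|$ and set
\[
u(x):=\sup\{v(x): v\in\mathcal{S}\},
\]
where $\mathcal{S}$ is the class of continuous, locally convex subsolutions $v$ of $\det(D^{2}v)=f$ in $\mathbb R^{2}\setminus\overline{B_{r_{0}}}$ satisfying $v\le\phi$ on $\partial B_{r_{0}}$ and $v\le U+M$ in the exterior. Since $U-M\in\mathcal{S}$, the family is nonempty, and the standard Perron/viscosity argument yields that $u$ is a locally convex viscosity solution of $\det(D^{2}u)=f$ in $\mathbb R^{2}\setminus\overline{B_{r_{0}}}$. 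To conclude $u=\phi$ continuously on $\partial B_{r_{0}}$, I would construct, at each $x_{0}\in\partial B_{r_{0}}$, local upper and lower barriers of the form $\ell_{x_{0}}(x)\pm K|x-x_{0}|^{\alpha'}$ with $\ell_{x_{0}}$ affine, $\ell_{x_{0}}(x_{0})=\phi(x_{0})$, and some $\alpha'<\alpha$. The smallness hypothesis $[\phi]_{C^{\alpha}(\partial B_{r_{0}})}\le\epsilon_{0}$ is precisely what makes these barriers dominate (respectively, be dominated by) $\phi$ in a neighborhood of $x_{0}$ and satisfy the correct Monge-Amp\`ere inequalities on one-sided neighborhoods.

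The main obstacle will be verifying the asymptotic (\ref{asy-2}) with precisely the prescribed coefficient $d$. Applying the asymptotic analysis of \cite{bao-li-z1} to the Perron solution $u$ yields an expansion
\[
u(x)=\tfrac{1}{2}|x|^{2}+d'\log|x|+c_{d'}+O(|x|^{-\sigma})
\]
for some $d'$ and $c_{d'}$. The two-sided sandwich $U-M\le u\le U+M$ then forces $d'=d$, since any discrepancy $(d-d')\log|x|$ would eventually violate one of the bounds at sufficiently large $|x|$; the constant $c_{d'}=c_{d}$ is then uniquely determined by $\phi$, $d$, $f$, and $r_{0}$. Uniqueness (for prescribed $d$) follows by comparison: the difference of two such solutions satisfies a linear uniformly elliptic equation, with coefficients from an averaged cofactor matrix of the two Hessians, that vanishes on $\partial B_{r_{0}}$ and decays at infinity, hence vanishes identically.
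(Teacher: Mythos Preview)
Your reduction agrees with the paper through the first stage: extend $f$ into $B_{r_0}$ so that $\frac{1}{2\pi}\int_{\mathbb R^2}(\tilde f-1)=d$ (the lower bound on $d$ is precisely the positivity of $\int_{B_{r_0}}\tilde f$), apply Theorem~\ref{thm1} with $A=I$, $b=0$ to obtain a global convex $U$ with $U(x)=\tfrac12|x|^2+d\log|x|+c^*+O(|x|^{-\sigma})$, and after adding a constant arrange $\|\phi-U\|_{C^\alpha(\partial B_{r_0})}\le\epsilon_1$. The sandwich $U-M\le u\le U+M$, the identification $d'=d$, and the uniqueness via the linearized (averaged cofactor) operator are also what the paper does; the last is exactly Lemma~\ref{uniquenesslem}.

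Where you diverge is in \emph{how} $u$ is built from $U$, and here there is a genuine gap. The paper does not run a Perron argument. It writes $u=U+h$, so that $h$ solves $Lh+\det(D^2h)=0$ in $\mathbb R^2\setminus B_{r_0}$ with $h=\phi-U$ on $\partial B_{r_0}$ and $h=O(1)$ at infinity, where $L=a_{ij}\partial_{ij}$ with $a_{ij}$ the cofactors of $D^2U$. It then iterates: $\psi_0$ solves $L\psi_0=0$ with data $\phi-U$, handled by the Kelvin inversion $y=x/|x|^2$, which turns the exterior problem into an interior elliptic problem on $B_{1/r_0}$ where Schauder gives $\|\tilde\psi_0\|_{C^{2,\alpha}}\le c_1\epsilon_1$ and hence $|D^m\psi_0(x)|\le C\epsilon_1|x|^{-2-m}$; then $\psi_1,\psi_2,\dots$ solve $L\psi_k=\det(D^2h_{k-2})-\det(D^2h_{k-1})$ with zero boundary data, and the smallness $c_1^2\epsilon_1<1$ makes the series $h=\sum\psi_k$ converge with $|D^2h|$ small, so $U+h$ stays convex. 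The smallness of $[\phi]_{C^\alpha}$ is used exactly to start this contraction.

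Your proposed boundary barriers $\ell_{x_0}(x)\pm K|x-x_0|^{\alpha'}$ do not work for Monge--Amp\`ere. For $0<\alpha'<1$ the Hessian of $|x-x_0|^{\alpha'}$ has eigenvalues $\alpha'(\alpha'-1)r^{\alpha'-2}<0$ and $\alpha' r^{\alpha'-2}>0$, so neither $\ell_{x_0}+K|x-x_0|^{\alpha'}$ nor $\ell_{x_0}-K|x-x_0|^{\alpha'}$ is convex; in particular the ``lower barrier'' is not a subsolution and cannot belong to $\mathcal S$. Without a legitimate subsolution touching $\phi(x_0)$ you cannot conclude $u=\phi$ on $\partial B_{r_0}$, and as written the smallness of $[\phi]_{C^\alpha}$ is never really exploited. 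The paper's iteration avoids barriers altogether by imposing the exact boundary data on the linear problem for $\psi_0$ and then controlling the nonlinear remainder perturbatively.
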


The organization of this article is as follows. The proof of Theorem \ref{thm1}, which is by an iteration method, is arranged in section two. The proof of Theorem \ref{thm2} in section three is based on a Perron's method. Theorem \ref{thm1} plays an essential role in the proof of Theorem \ref{thm2}. Here we further remark that in order to use Theorem \ref{thm1} in the proof of Theorem \ref{thm2}, it is crucial to assume that $f_1$ is very close to its spherical average rather than $1$.  Finally the proof of Theorem \ref{thm2} also relies on a result (Lemma \ref{uniquenesslem}) of the authors' previous paper \cite{bao-li-z1} to determine the unique constant in the expansion.

\section{Proof of Theorem \ref{thm1}}

Denote
$$f_1(y):=f(\sqrt{A}^{-1}y),\quad\mbox{and}\quad \tilde f_1(y):=\frac{1}{2\pi |y|}\int_{\partial B(0,|y|)}f_1(x)dS.$$
We only need to determine $v(y)$, which satisfies
$$\det(D^2v(y))=f_1(y),\quad y\in \mathbb R^2$$
and
$$\limsup_{|y|\to \infty}|y|^{j+\sigma}\bigg |D^j\Big (v(y)-\frac 12|y|^2-d\log |y|-c\Big )\bigg |=0 $$
for $j=0,...,k+1$ and $\sigma\in (0,\min\{\beta-2,2\})$, where
$$d=\frac 1{2\pi}\int_{\mathbb R^2}(f_{1}-1)dx=\frac 1{2\pi}\int_{\mathbb R^2}(f-1)dx.$$
Once such $v$ is found, we let
$$u(x)=v\left(\sqrt{A}x\right)+b\cdot x. $$
Then we see that (\ref{asy-1}) holds for $u$.

\subsection{Radial solutions and some elementary estimates}

Before we set out to find $v$, we first construct a radial solution of
\begin{equation}\label{rad-u}
\det(D^2 U)=\tilde f_1, \quad \mbox{in}\quad \mathbb R^2.
\end{equation}

Let
$$U(\,r)=\int_0^r\bigg ( \int_0^s 2t\tilde f_1(t)dt   \bigg )^{\frac 12}ds, \quad r=|y|, $$
then one can verify easily that
$$U'(\,r)=\left(\int_0^r 2t\tilde f_1(t)dt\right)^{\frac 12}, \quad U''(\,r)=\frac{r\tilde f_1(\,r)}{\left(\int_0^r 2s\tilde f_1(s)ds\right)^{\frac 12}},$$
and consequently
$$\det(D^2U)=\partial_{11}U\partial_{22}U-\partial_{12}U^2=U''(\,r)\frac{U'(\,r)}r=\tilde f_1(\,r),\quad r>0. $$
Moreover
$$U(\,r)=\frac 12 r^2+d\log r+c_{d}+U(0)+O(r^{-\delta}), \quad \mbox{as }~~ r\to \infty, $$
 where $\delta=\min\{\beta-2,2\}$, using \eqref{af} and the definitions of $\tilde f_1$ and $f_1$,
 $$d=\lim_{r\rightarrow+\infty}\frac{U(\,r)-\frac{r^{2}}{2}}{\log r}=\int_0^{\infty}r\left(\tilde f_1(\,r)-1\right)dr=\frac 1{2\pi}\int_{\mathbb R^2}(f_{1}-1)dx,$$
and
\begin{align*}
c_{d}=&\lim_{r\rightarrow+\infty}\left(U(\,r)-\frac{r^{2}}{2}-d\log\left(r+\sqrt{r^{2}+d}\right)+d\log\frac{r+\sqrt{r^{2}+d}}{r}\right)\\
=&\int_0^{\infty}\bigg ( \Big(\int_0^s 2t f_1(t)dt\Big)^{\frac 12}-s-\frac{d}{\sqrt{s^2+d}}\bigg )ds+d\log 2.
\end{align*}
Note that $f_1$ may not be close to $1$ for $|y|$ not large, but it is close to $\tilde f_1$ when $\epsilon_0$ in (\ref{af-2}) is small.

Next, we will give some estimates for $f_{1}$ and $\tilde{f}_{1}$. We observe that in addition to (\ref{af-2}), $f_1$ also satisfies
\begin{equation}\label{f1-2}
\left\{\begin{array}{ll}
\dfrac{1}{c_0}\le f_1(y)\le c_0,\quad \forall y\in \mathbb R^2,\\
\left|D^j(f_1(y)-1)\right|\le \dfrac{C_0(c_0,A)}{(1+|y|)^{\beta+j}},\quad j=0,1...,k.
\end{array}
\right.
\end{equation}
It is easy to check that in polar coordinates
\begin{equation}\label{af-3}
|\partial_rf_1|+\frac 1r\left|\partial_{\theta}f_1\right|\le \frac{C(c_0,A)}{r^{\beta+1}}, \quad r\ge 1,
\end{equation}
and
\begin{equation}\label{af-4}
|\partial_{rr}f_1|+\frac 1r|\partial_{r\theta}f_1|+\frac 1{r^2}|\partial_{\theta\theta}f_1|\le \frac{C(c_0,A)}{r^{\beta+2}},\quad r\ge 1.
\end{equation}
Now we claim that
\begin{equation}\label{af-5}
|D^j(f_1-\tilde f_1)(y)|\le \frac{C(c_0,A)}{(1+|y|)^{\beta+j}},\quad y\in \mathbb R^2,\quad j=0,1,2.
\end{equation}

Obviously, we just need to verify (\ref{af-5}) for $r=|y|\ge 1$. Indeed, writing $f_1-\tilde f_1$ as
\begin{align}\label{f11}
f_1(y)-\tilde f_1(\,r)
=& f_1(re^{i\psi})-\frac 1{2\pi}\int_0^{2\pi}f_1(re^{i\theta})d\theta \quad \quad (y=re^{i\psi})  \\
=& \frac 1{2\pi}\int_0^{2\pi} \bigg (f_1(re^{i\psi})-f_1(re^{i\theta})\bigg )d\theta. \nonumber
\end{align}
We first use the estimate on $\partial_{\theta}f_1$ in (\ref{af-3}) to obtain
$$|f_1(y)-\tilde f_1(\,r)|\le \frac{C(c_0,A)}{(1+r)^{\beta}}. $$
Then, for $j=1$, we have
$$\left|D(f_1-\tilde f_1)(y)\right|
\le  C\left(|\partial_r f_1|+\frac 1r |\partial_{\theta}f_1|\right)
\le \frac{C(c_0,A)}{(1+r)^{\beta+1}}.
$$
Finally, for $j=2$, it is easy to see from (\ref{f11}) that
$$\left|\partial_{rr}(f_1-\tilde f_1)\right|\le \frac{C(c_0,A)}{(1+r)^{\beta+2}}. $$
Since $\tilde f_1$ is radial,
$$\partial_{r\theta}(f_1-\tilde f_1)=\partial_{r\theta}f_1, \quad \partial_{\theta\theta}(f_1-\tilde f_1)=\partial_{\theta\theta}f_1. $$
Therefore, by (\ref{af-4}),
\begin{align*}
&\left|D^2(f_1-\tilde f_1)(x)\right|
\le
C\left(\left|\partial_{rr}(f_1-\tilde f_1)\right|+\frac{|\partial_{r\theta}f_1|}r+\frac{|\partial_{\theta\theta}f_1|}{r^2}\right)
\le  \frac{C}{(1+r)^{\beta+2}}.
\end{align*}
Thus, (\ref{af-5}) is established. Combining (\ref{af-2}) and (\ref{af-5}), we obtain
\begin{equation}\label{f-af}
\left|D^m (f_1-\tilde f_1)(y)\right|\le \frac{\epsilon_1(\epsilon_0,A,c_0,\beta)}{ (1+|y|)^{\beta_1}},\quad y\in \mathbb R^2, ~~m=0,1,
\end{equation}
where $\beta_1=\frac{\beta}2+1\in (2,\beta)$ and $\epsilon_1\to 0$ as $\epsilon_0\to 0$.

We further obtain, by simple computations, that
\begin{equation}\label{Uij}
\partial_{11}U=F_1+F_2\cos (2\theta),\quad \partial_{22}U=F_1-F_2\cos(2\theta),\quad \partial_{12}U=F_2\sin(2\theta)
\end{equation}
where
$$F_1:=\frac 12(U''(\,r)+U'(\,r)/r),\quad F_2:=\frac 12(U''(\,r)-U'(\,r)/r). $$
It follows from (\ref{f1-2}), (\ref{af-2}) and (\ref{af-5}) that there exists $c_1(c_0,A)>0$ such that
\begin{equation}\label{close-delta}
\begin{cases}
\left|D^j\left(\partial_{22}U-1\right)(y)\right|\le \dfrac{c_1}{ (1+|y|)^{2+j}}, \quad y\in \mathbb R^2, \\\\
\left|D^j(\partial_{11}U-1)(y)\right|\le \dfrac{c_1}{ (1+|y|)^{2+j}},\quad y\in \mathbb R^2, \\\\
\left|D^j(\partial_{12}U)(y)\right|\le \dfrac{c_1}{ (1+|y|)^{2+j}},\quad y\in \mathbb R^2,
\end{cases}
\end{equation}
for $j=0,1,2$. It is easy to verify (\ref{close-delta}) for $y$ large since $\tilde f_1$ is close to $f_1$ and $f_1$ is close to $1$ when $|y|$ is large. For $|y|$ not large (\ref{close-delta}) certainly holds.

\subsection{The first step of iteration}

Suppose that the solution $u$ of (\ref{main-eq}) is of the form
$$u=U+\phi.$$ Clearly $\phi$ satisfies
\begin{equation}\label{eq-phi}
\partial_{11}\phi\partial_{22}U+\partial_{22}\phi\partial_{11}U-2\partial_{12}\phi\partial_{12}U+\det(D^2\phi)=f_1-\tilde f_1,\quad \mbox{in}~~ \mathbb R^2.
\end{equation}

Let
$$a^*_{11}:=\partial_{22}U,\quad a^*_{22}:=\partial_{11}U,\quad a^*_{12}:=-\partial_{12}U,$$
then by \eqref{close-delta},
$$c_1^{-1} I\le (a^*_{ij})_{2\times 2}\le c_1 I. $$
It is well known that the first part of (\ref{eq-phi}) can be written as a divergence form.
$$L\phi:=\partial_i(a^*_{ij}\partial_j \phi)=\partial_{22}U\partial_{11}\phi+\partial_{11}U\partial_{22}\phi-2\partial_{12}U\partial_{12}\phi,\quad \forall \phi\in C^2(\mathbb R^2), $$
because $\partial_i a^*_{ij}=0$ for $j=1,2$.  Then \eqref{eq-phi} can be written as
\begin{equation}\label{eq-phi-1}
\partial_i(a^{*}_{ij}\partial_j \phi)+\det(D^2\phi)=f_1-\tilde f_1,\quad \mbox{in}~~ \mathbb R^2.
\end{equation}
Let $G$ be the fundamental solution of $-L$ on $\mathbb R^2$
$$-\partial_{y_i}(a_{ij}^*(y)\partial_{y_j}G(x,y))=\delta_x, \quad \mbox{ in }~~ \mathbb R^2, $$
where $\delta_x$ is the Dirac mass at $x$. According to the theory of Kenig-Ni \cite{kenig} there exists $c_2(c_0,A)$ such that
\begin{equation}\label{green}
|G(x,y)|\le \left\{\begin{array}{ll}
c_2\big|\log |x-y| \big|,\quad y\in B(x,\frac 12),\\\\
c_2\Big(\big|\log |x-y| \big|+1\Big)\quad y\in \mathbb R^2\setminus B(x,\frac 12).
\end{array}
\right.
\end{equation}

In the following, we will start our iteration process.  We first solve
\begin{equation}\label{phi0-iteration}
L\phi^0=f_1-\tilde f_1,\quad \mbox{in}~~  \mathbb R^2
\end{equation}
by letting
\begin{equation}\label{phi0-est}
\phi^0(x)=\int_{\mathbb R^2}G(x,y)(\tilde f_1(y)-f_1(y))dy.
\end{equation}

The estimates of $\phi^0$ are stated in the following. The proof will be given in subsection \ref{proofofproposition}.
\begin{prop}\label{phi0-prop}
There exists $c_3>0$ only depending on $c_0,A$ and $\beta$ such that $\phi^0$ satisfies
\begin{equation}\label{phi0-2}
\begin{cases}
\left|D^j\phi^0(x)\right|\le \dfrac{c_3\epsilon_1}{(1+|x|)^{j+\tau}},\quad \forall x\in \mathbb R^2,\, j=0,1,2\\\\
\left|D^2\phi^0(y)-D^2\phi^0(z)\right|\le c_3\epsilon_1|y-z|^{\alpha},\quad \forall~y,z\in{B}_{1}, \\\\
\left|D^2\phi^0(y)-D^2\phi^0(z)\right|\le \dfrac{c_3\epsilon_1}{ |x|^{2+\tau+\alpha}}|y-z|^{\alpha}, ~~\forall~ y,z\in B_{\frac{3|x|}{2}}\setminus B_{\frac{|x|}{2}},~~ |x|>1,
\end{cases}
\end{equation}
where $\tau\in\left(0,\frac{\beta}2-1\right)$,$\alpha\in (0,1)$ depends on $c_0,A,\beta$.
\end{prop}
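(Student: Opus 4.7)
The plan proceeds in three stages. Stage one establishes pointwise decay of $\phi^0$ at infinity (the $j=0$ case of \eqref{phi0-2}). Stage two transfers that decay to $D\phi^0$ and $D^2\phi^0$ by rescaling. Stage three reads off the Hölder bounds on $D^2\phi^0$ from interior Schauder and Cordes--Nirenberg estimates. I expect the principal obstacle to lie in stage one.

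In two dimensions the Kenig--Ni bound \eqref{green} only gives $|G(x,y)|\lesssim |\log|x-y||+1$, so the naive estimate from \eqref{phi0-est}
$$ |\phi^0(x)|\le \int_{\mathbb R^2}|G(x,y)|\,|f_1-\tilde f_1|(y)\,dy $$
yields merely $O(\epsilon_1\log|x|)$ from the piece $y\in B_{|x|/4}$ and hence no decay. To gain decay I exploit that $h:=\tilde f_1-f_1$ has \emph{zero spherical average}, $\int_{\partial B_r}h\,dS=0$ for every $r>0$, so writing $\bar G(x,r):=\fint_{\partial B_r}G(x,\cdot)\,dS$ the representation becomes
$$ \phi^0(x)=\int_{\mathbb R^2}\bigl(G(x,y)-\bar G(x,|y|)\bigr)h(y)\,dy. $$
Since \eqref{close-delta} gives $a^*_{ij}=\delta_{ij}+O((1+|y|)^{-2})$, a comparison of $G$ with the Laplace Green's function $-\tfrac{1}{2\pi}\log|x-y|$ (via Kenig--Ni applied to the perturbation $L-\Delta$) shows that the non-radial part $G-\bar G$ decays strictly better than $G$ itself: on $B_{|x|/4}$ the Laplacian contribution is $O(|y|/|x|)$ by Taylor expansion in $y$, while the perturbation is of comparable or smaller order. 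On the middle annulus $B_{2|x|}\setminus B_{|x|/4}$ and the exterior $\mathbb R^2\setminus B_{2|x|}$ I instead use \eqref{green} together with the $(1+|y|)^{-\beta_1}$ decay in \eqref{f-af} directly, bounding both contributions by $\epsilon_1|x|^{2-\beta_1}(1+\log|x|)$. Summing the three pieces yields $\phi^0(x)\to 0$ as $|x|\to\infty$.

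With $\phi^0\to 0$ in hand, the sharp rate comes from a barrier argument. For any fixed $\tau\in(0,\beta_1-2)=(0,\beta/2-1)$, the radial function $w(x):=c_*\epsilon_1(1+|x|^2)^{-\tau/2}$ satisfies $Lw\le -c(\tau)\epsilon_1(1+|x|)^{-\tau-2}$ at infinity by direct differentiation together with \eqref{close-delta}; since $\tau+2<\beta_1$ this beats $|h|$, and the maximum principle (closed at infinity by Stage one) gives $|\phi^0(x)|\le c_3\epsilon_1(1+|x|)^{-\tau}$. For $j=1,2$ I rescale: for $|x|>2$ set $\tilde\phi(z):=\phi^0(x+|x|z/2)$ on $B_1$, whose $L^\infty$-norm on $B_1$ is bounded by $C\epsilon_1|x|^{-\tau}$. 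The rescaled linear equation $\tilde L\tilde\phi=\tilde h$ has coefficients $C^1$-close to $\delta_{ij}$ and $\|\tilde h\|_{C^\alpha(B_1)}\le C\epsilon_1|x|^{2-\beta_1}$, so interior Schauder (or Cordes--Nirenberg in two dimensions) gives $\|\tilde\phi\|_{C^{2,\alpha}(B_{1/2})}\le c_3\epsilon_1|x|^{-\tau}$. Unscaling produces both the pointwise bounds on $|D^j\phi^0|$ for $j=1,2$ and the annular Hölder estimate in \eqref{phi0-2}. The Hölder estimate on $B_1$ is the same Schauder bound applied on a fixed compact set, using $\|\phi^0\|_{L^\infty(B_2)}\le C\epsilon_1$ from the $j=0$ case and $\|h\|_{C^\alpha(B_2)}\le C\epsilon_1$ from \eqref{f-af}.
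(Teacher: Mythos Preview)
Your overall architecture (exploit the zero spherical average of $f_1-\tilde f_1$ to gain decay in the Green representation, then rescale for the derivative and H\"older bounds) matches the paper, and Stages two and three are exactly what the paper does. But Stage one has two genuine gaps.

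\medskip

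\textbf{The oscillation bound on $G$ is not justified.} Your claim that $G(x,y)-\bar G(x,|y|)=O(|y|/|x|)$ on $B_{|x|/4}$ is essentially a gradient estimate $|D_yG(x,y)|\lesssim (\log|x|)/|x|$ there. Your proposed mechanism, comparing $G$ with the Laplace kernel ``via Kenig--Ni applied to the perturbation $L-\Delta$'', does not work: the coefficients $a_{ij}^*$ satisfy $|a_{ij}^*-\delta_{ij}|\le c_1(1+|y|)^{-2}$ only at infinity; near the origin they are merely bounded elliptic, so $L-\Delta$ is $O(1)$ and no smallness is available. The paper isolates this as a separate lemma and proves it in two steps: (i) on the annulus $\tfrac12|x|\le|y|\le\tfrac34|x|$ the rescaled coefficients \emph{are} $\delta_0$-close to $\delta_{ij}$, so Cordes--Nirenberg on $g(y)=G(x,y)$ gives $|Dg|\le C(\log|x|)/|x|$ there; (ii) one then differentiates the equation $a_{ij}^*\partial_{ij}g=0$, rewrites the result as a non-divergence equation for $H=\partial_1 g$ with no zeroth-order term, and applies the maximum principle on $B_{|x|/2}$ to push the annular bound into the ``bad'' inner region. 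With this gradient bound in hand the paper simply subtracts the constant $G(x,0)$ (since $\int_{B_{|x|/2}}(f_1-\tilde f_1)=0$) and uses the mean value theorem; your spherical-average subtraction is an equivalent device, but it needs the same lemma.

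\medskip

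\textbf{The barrier has the wrong sign.} In two dimensions $\Delta(r^{-\tau})=\tau^2 r^{-\tau-2}>0$, so your $w(x)=c_*\epsilon_1(1+|x|^2)^{-\tau/2}$ satisfies $Lw\approx\Delta w>0$ at infinity; it is a \emph{sub}solution, and you cannot bound $|\phi^0|$ from above with it. In fact no barrier is needed: once the gradient estimate on $G$ is available, the integral over $E_1=B_{|x|/2}$ is already $\le C\epsilon_1|x|^{2-\beta_1}\log|x|$, and the integral over $E_2=\mathbb R^2\setminus E_1$ is bounded by the crude Kenig--Ni estimate and the decay of $f_1-\tilde f_1$, giving the same order. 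Since $\beta_1>2$ this is $\le C\epsilon_1(1+|x|)^{-\tau}$ for any $\tau<\beta_1-2=\tfrac{\beta}{2}-1$, which is exactly the $j=0$ line of \eqref{phi0-2}.
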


Once we have the estimate for $\phi_{0}$ from Proposition \ref{phi0-prop}, we let $$\psi^1(x)=\int_{\mathbb R^2}G(x,y)\det(D^2\phi^0(y))dy, $$
then $\psi^1$ solves
\begin{equation}\label{psi1-iteration}
L\psi^1=-\det(D^2\phi^0),\quad \mbox{in}~~ \mathbb R^2.
\end{equation}
Since
$$\det(D^2\phi^0)=\partial_1\left(\partial_1 \phi^0\partial_{22}\phi^0\right)-\partial_2\left(\partial_{12}\phi^0\partial_1\phi^0\right),$$
we write $\psi^1$ as
$$\psi^1(x)=\int_{\mathbb R^2}\left(-\partial_{y_1}G(x,y)\partial_{1}\phi^0(y)\partial_{22}\phi^{0}(y)+\partial_{y_2}G(x,y)\partial_{1}\phi^0(y)\partial_{12}\phi^0(y)\right)dy. $$

It is easy to use the decay rate of $D^2\phi^0$ in (\ref{phi0-2}) to obtain
\begin{equation}\label{psi-11}
\left|\psi^1(x)\right|\le C(c_0,A,\beta)(c_3\epsilon_1)^2,\quad x\in B_{2R_0}.
\end{equation}
Then from (\ref{psi-11}) and elliptic estimate we have
\begin{equation}\label{psi-12}
\left\|\psi^1(x)\right\|_{C^{2,\alpha}(B_{R_0})}\le  C(c_0,A,\beta) c_3^2\epsilon_1^2.
\end{equation}
For $|x|>R_0$, we decompose $\mathbb R^2$ into $E_1\cup E_2$. For the integral on $E_1=B(0,\frac{|x|}{2})$, we use Proposition \ref{phi0-prop} to get
\begin{align*}
&\left|\int_{E_1}\left(\partial_{y_1}G(x,y)\partial_{1}\phi^{0}(y)\partial_{22}\phi^{0}(y)-\partial_{y_2}G(x,y)\partial_{1}\phi^{0}(y)\partial_{12}\phi^{0}(y)\right)dy\right|  \\
\le & C(c_0,\beta,A)(c_3\epsilon_1)^2\frac{\log |x|}{|x|^{2+2\tau}}\le \frac{C(c_0,A,\beta)(c_3\epsilon_1)^2}{(1+|x|)^{\tau}}.
\end{align*}

\begin{rem} Writing $\det(D^2\phi^0)$ in the divergence form leads to differentiation on $G$ and thus we avoid a logarithmic term from the integration over $E_1$.
This is exactly like the corresponding estimate for $\phi^0$. Here we further remark that the estimate for $\psi^1$ is exactly like that for $\phi^0$, as the estimate of $G$ is the same, the H\"older norm of the elliptic operator in the scaling part still has the same bound.
\end{rem}
Using the rough estimate of $G$, \eqref{green}, and estimates of $\phi^{0}$, we obtain easily
\begin{align*}
&\left|\int_{E_2}\left(\partial_{y_1}G(x,y)\partial_{1}\phi^{0}(y)\partial_{22}\phi^{0}(y)-\partial_{y_2}G(x,y)\partial_{1}\phi^{0}(y)\partial_{12}\phi^{0}(y)\right) dy \right|\\
\le & \frac{C(c_0,A,\beta)(c_3\epsilon_1)^2}{|x|^{4+2\tau}}
\le \frac{C(c_0,\beta,A)(c_3\epsilon_1)^2}{(1+|x|)^{\tau}}.
\end{align*}
Correspondingly elliptic estimates lead to estimates on higher derivatives. Therefore the following estimates have been obtained for $\psi^1$: for $x\in \mathbb R^2$,
there exists $c_4(c_0,\beta,A)>0$ such that
\begin{equation}\label{psi-1e}
\begin{cases}
\left|D^j\psi^1(x)\right|\le \dfrac{c_4 c_3^2 \epsilon_1^2}{(1+|x|)^{j+\tau}},\quad \forall x\in \mathbb R^2,\, j=0,1,2\\\\
\left|D^2\psi^1(y)-D^2\psi^1(z)\right|\le c_4 c_3^2 \epsilon_1^2|y-z|^{\alpha},\quad \forall~y,z\in{B}_{1}, \\\\
\left|D^2\psi^1(y)-D^2\psi^1(z)\right|\le \dfrac{c_4 c_3^2 \epsilon_1^2}{ |x|^{2+\tau+\alpha}}|y-z|^{\alpha}, ~~\forall~ y,z\in B_{\frac{3|x|}{2}}\setminus B_{\frac{|x|}{2}},~~ |x|>1,
\end{cases}
\end{equation}
where $\alpha\in (0,1)$ is defined as in (\ref{phi0-2}).

\begin{rem} The constant $c_4$ in (\ref{psi-1e}) only depends on $c_0,\beta,A$ and is obtained from evaluating the Green's representation formula and standard elliptic estimates. If the $\det(D^2\phi^0)$ is replaced by another function with fast decay at infinity, the constant $c_4$ does not change.
\end{rem}

\subsection{Completion of the proof of Theorem \ref{thm1} by iteration}

\begin{proof}[Proof of Theorem \ref{thm1}] We will prove it by iteration. Let
$$\phi^1:=\phi^0+\psi^1,$$
then, it is clear from \eqref{phi0-iteration} and \eqref{psi1-iteration} that
\begin{equation}\label{phi1-iteration}
L\phi^1=L\phi^0+L\psi^1=f_1-\tilde f_1-\det(D^2\phi^0).
\end{equation}
Rewrite it as
$$L\phi^1+\det(D^2\phi^1)=f_1-\tilde f_1+\det(D^2\phi^1)-\det(D^2\phi^0). $$
Let $\psi^2$ solve
$$L\psi^2:=\det(D^2\phi^0)-\det(D^2\phi^1). $$

In general, for $l\geq2$, we define
$$\phi^{l}:=\phi^{l-1}+\psi^{l}, $$
and
$$L\psi^{l}:=\det(D^2 \phi^{l-2})-\det(D^2\phi^{l-1}).$$
We will prove  the following estimates for $\phi^{l}$, $l\geq0$:
\begin{equation}\label{phi-e}
\left\{\begin{array}{ll}
\left|D^j\phi^l(y)\right|\le \dfrac{2c_3 \epsilon_1}{ (1+|y|)^{\tau+j}},\quad y\in \mathbb R^2, \,\, j=0,1,2\\
\\
\left\|\phi^l\right\|_{C^{2,\alpha}(B_1)}\le 2c_3\epsilon_1, \\
\\
|D^2\phi^l(y)-D^2\phi^l(z)|\le \frac{2c_3\epsilon_1}{|x|^{\tau+2+\alpha}}|y-z|^{\alpha}, \,\, y, z\in B(x,\frac{|x|}2), |x|>1.
\end{array}
\right.
\end{equation}
by using the following estimates for $\psi^{l}$, $l\geq0$,
\begin{equation}\label{psi-e}
\begin{cases}
\left|D^j\psi^{l+1}(x)\right|\le \dfrac{2c_4 (c_3\epsilon_1)^{l+2}}{(1+|x|)^{j+\tau}},\quad \forall x\in \mathbb R^2,\, j=0,1,2\\\\
\left|D^2\psi^{l+1}(y)-D^2\psi^l(z)\right|\le 2c_4 (c_3\epsilon_1)^{l+2}|y-z|^{\alpha},\quad \forall~y,z\in{B}_{1}, \\\\
\left|D^2\psi^{l+1}(y)-D^2\psi^l(z)\right|\le \dfrac{2c_4 (c_3\epsilon_1)^{l+2}}{ |x|^{2+\tau+\alpha}}|y-z|^{\alpha}, ~~\forall~ y,z\in B_{\frac{3|x|}{2}}\setminus B_{\frac{|x|}{2}},~~ |x|>1,
\end{cases}
\end{equation}
which can be proved by induction.

First, for $l=0$, we have from \eqref{phi0-2} and \eqref{psi-1e} that \eqref{phi-e} and \eqref{psi-e} holds, respectively.
Then, by the definition of $\phi^{1}$, $\phi^1=\phi^0+\psi^1$, using the estimate of $\phi^0$ and $\psi^1$, we immediately have
$$|D^j\phi^1(y)|\le |D^j\phi^0(y)|+|D^j\psi^1(y)|\leq \frac{(c_3\epsilon_1+c_4c_3^2\epsilon_1^2)}{ (1+|y|)^{\tau+j}}, $$
for $y\in \mathbb R^2$ and $j=0,1,2$. The $C^{\alpha}$ estimate for the second derivatives are similar. If we choose $\epsilon_1$ to satisfy $c_4c_3\epsilon_1<\frac 12$ and $c_3\epsilon_1<\frac 12$,  then we obtain the estimate \eqref{phi-e} holds for $\phi^1$.

Since $\psi^2$ solve the linear equation, it has the expression
\begin{align*}
\psi^2(y):&=\int_{\mathbb R^2}G(y,\eta)(\det(D^2\phi^1)-\det(D^2\phi^0))d\eta\\
&=\int_{\mathbb R^2}\partial_{\eta_1}G(y,\eta)\left(-\partial_{1}\phi^{1}\partial_{22}\phi^{1}+\partial_{1}\phi^{0}\partial_{22}\phi^{0}\right)\\
&\qquad\qquad+\partial_{\eta_2}G(y,\eta)
\left(-\partial_{1}\phi^{0}\partial_{12}\phi^{0}+\partial_{1}\phi^{1}\partial_{12}\phi^{1}\right)d\eta.
\end{align*}
It is easy to see
\begin{align*}
\partial_{1}\phi^{1}\partial_{22}\phi^{1}-\partial_{1}\phi^{0}\partial_{22}\phi^{0}=\partial_{1}\phi^{0}\partial_{22}\psi^{1}+\partial_{1}\psi^{1}\partial_{22}\phi^{0}+\partial_{1}\psi^{1}\partial_{22}\psi^{1},\\
\partial_{1}\phi^{1}\partial_{12}\phi^{1}-\partial_{1}\phi^{0}\partial_{12}\phi^{0}=\partial_{1}\phi^{0}\partial_{12}\psi^{1}+\partial_{1}\psi^{1}\partial_{12}\phi^{0}+\partial_{1}\psi^{1}\partial_{12}\psi^{1}.
\end{align*}
Thus $\psi^2$ can be evaluated as
\begin{align*}
\psi^2(y)=\int_{\mathbb R^2}\Big(&-\partial_{\eta_1}G(y,\eta)\left(\partial_{1}\phi^{0}\partial_{22}\psi^{1}+\partial_{1}\psi^{1}\partial_{22}\phi^{0}+\partial_{1}\psi^{1}\partial_{22}\psi^{1}\right)\\
&+\partial_{\eta_2}G(y,\eta)\left(\partial_{1}\phi^{0}\partial_{12}\psi^{1}+\partial_{1}\psi^{1}\partial_{12}\phi^{0}+\partial_{1}\psi^{1}\partial_{12}\psi^{1}\right)\Big)d\eta.
\end{align*}
Using (\ref{phi0-2}) and (\ref{psi-1e}) we obtain \eqref{psi-e} holds for $\psi^{2}$. That is, \eqref{psi-e} holds for $l=1$.

Suppose that \eqref{phi-e} and \eqref{psi-e} holds for $l=k$, then by
$$\phi^{k+1}:=\phi^{k}+\psi^{k+1}=\phi^{0}+\sum_{l=1}^{k}\psi^{l},$$
we have
\begin{align*}
\left|D^j\phi^{k+1}(y)\right|
&\leq \left|D^j\phi^{0}(y)\right|+\sum_{l=1}^{m}\left|D^{j}\psi^{l}\right|\\
&\le \frac{c_3\epsilon_1+ c_4(c_3\epsilon_1)^2+ 2c_4(c_3\epsilon_1)^3+\cdots+2c_4(c_3\epsilon_1)^{l+1}}{(1+|y|)^{j+\tau}}\\
&\le \frac{c_3\epsilon_1\left(1+ c_4(c_3\epsilon_1)+ 2c_4(c_3\epsilon_1)^2+\cdots+2c_4(c_3\epsilon_1)^{l}\right)}{(1+|y|)^{j+\tau}}\\
&\le 2c_{3}\epsilon_{1}(1+|y|)^{-j-\tau},\quad j=0,1,2,\quad \mbox{in }~~ \mathbb R^2.
\end{align*}
Similarly, we have \eqref{phi-e} holds for $\phi^{k+1}$. Continue this process, we can obtain \eqref{phi-e} and \eqref{psi-e} holds for any $l\geq0$.

Notice that for all $l$, the estimates of $\phi^l$ satisfy the same bound as in (\ref{phi-e}), because the estimates for $\psi^l$ use the same estimate for $G$ and $D G$. The only difference is the right hand side: $\det(D^2\phi^l)-\det(D^2\phi^{l+1})$. Thus, for $\epsilon_1$ small the process converges and $\phi^{l}$ converges to a solution of
$$\det(D^2 v)=f. $$
The estimates on the asymptotic behavior of $u$ at infinity as well as their derivatives can be determined by the main theorem in \cite{bao-li-z1}.
Theorem \ref{thm1} is established.
\end{proof}

\subsection{Proof of Proposition \ref{phi0-prop}}\label{proofofproposition}

From (\ref{close-delta}) we see that
\begin{equation}\label{close-1}
 |D^j (a^*_{ij}-\delta_{ij})(y)|\le c_2 (1+|y|)^{-2-j},\quad j=0,1,2, \quad \forall y\in \mathbb R^2.
 \end{equation}
So $a_{ij}^*$ is very close to $\delta_{ij}$ when $|y|$ is large.

Before we present the proof of Proposition \ref{phi0-prop} we list two tools needed for this proof: Cordes-Nirenberg estimate and an estimate of the Green's function of $L$. The Cordes-Nirenberg estimate is stated in the following lemma (see e.g. \cite{cabre}):

\begin{lem}(Cordes-Nirenberg) \label{cordes}
For any $h$ satisfying
$$ a_{ij}\partial_{ij}h=0,\quad \mbox{in }~~ B_1\subset \mathbb R^n,\quad n\ge 2, $$
there exists an $\delta_0>0$ depending only on $n$ such that if $| a_{ij}-\delta_{ij}|\le \delta_0$ for all $i,j=1,...,n$
the following estimate holds:
$$\|D h\|_{C^{1/2}(B_{1/2})}\le C(n)\|h\|_{L^{\infty}(B_1)}. $$
\end{lem}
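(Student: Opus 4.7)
The plan is to establish this classical Cordes--Nirenberg estimate (as in \cite{cabre}) by a Campanato-type iteration that compares $h$, on every small scale, with the solution of a frozen-coefficient elliptic equation. By a standard covering and translation argument it suffices to show that for every $x_0\in B_{1/2}$ and every small $r>0$,
\[
\inf_{L\text{ affine}} \sup_{B_r(x_0)}|h-L|\le C r^{3/2}\|h\|_{L^\infty(B_1)},
\]
which is precisely the Campanato characterization of $Dh\in C^{1/2}$ at $x_0$, summable across $x_0\in B_{1/2}$ to the desired norm bound.

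To produce such an affine approximation at scale $r$, I would freeze the coefficients at $x_0$ and let $w$ solve the constant-coefficient problem $a_{ij}(x_0)\partial_{ij}w=0$ in $B_r(x_0)$ with $w=h$ on $\partial B_r(x_0)$. Since $w$ satisfies a uniformly elliptic constant-coefficient equation, classical linear-approximation estimates give, at scale $\theta r$ for $\theta\in(0,1/2)$,
\[
\inf_{L\text{ affine}}\sup_{B_{\theta r}(x_0)}|w-L|\le C\theta^2\|w\|_{L^\infty(B_r)}\le C\theta^2\|h\|_{L^\infty(B_r)}.
\]
The difference $v=h-w$ solves $a_{ij}\partial_{ij}v=(a_{ij}(x_0)-a_{ij}(x))\partial_{ij}w$; crucially, all second derivatives now sit on the smooth function $w$, so interior estimates for the constant-coefficient operator yield $\|D^2w\|_{L^n(B_{r/2})}\lesssim r^{-1}\|h\|_{L^\infty(B_r)}$. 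The ABP (Alexandrov) maximum principle, together with $|a_{ij}(x_0)-a_{ij}(x)|\le 2\delta_0$, then gives $\|v\|_{L^\infty(B_r)}\le C\delta_0\|h\|_{L^\infty(B_r)}$.

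Combining these two bounds, $h$ is approximated in $B_{\theta r}(x_0)$ by an affine function with error at most $(C\theta^2+C\delta_0)\|h\|_{L^\infty(B_r)}$. Choosing first $\theta$ small so that $C\theta^2\le\tfrac12\theta^{3/2}$ and then $\delta_0=\delta_0(n)$ small so that $C\delta_0\le\tfrac12\theta^{3/2}$, I obtain a gain factor $\theta^{3/2}$ on passing from scale $r$ to scale $\theta r$. A geometric iteration over dyadic scales $r_k=\theta^k$, together with summation of the successive optimal affine corrections, yields the Campanato decay of exponent $1/2$ and hence the stated estimate $\|Dh\|_{C^{1/2}(B_{1/2})}\le C(n)\|h\|_{L^\infty(B_1)}$.

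The main obstacle, and the essence of the Cordes--Nirenberg mechanism, is that one has no a priori $W^{2,p}$ control on the original solution $h$, so ABP cannot be applied directly to $h$ itself. The resolution above is to pass all second derivatives onto the smooth comparison function $w$ and apply ABP only to the difference $v$; the smallness hypothesis $|a_{ij}-\delta_{ij}|\le\delta_0$ then enters precisely as the perturbation gain that defeats the quadratic loss $C\theta^2$ coming from the frozen-coefficient step. In dimension two one could alternatively invoke Cordes' algebraic identity to reduce directly to a small $L^2$ perturbation of the Laplacian, but the Campanato iteration above works uniformly for all $n\ge 2$ as stated.
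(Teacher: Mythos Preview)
The paper does not prove this lemma: it is quoted as a known result (with a reference to \cite{cabre}) and used as a black box in the proof of Lemma~\ref{easy-lem-1}. Your proposal is therefore not a reconstruction of the paper's argument but an independent proof, and the Campanato-iteration scheme you outline is exactly the standard route taken in \cite{cabre}.

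One technical point deserves care. In your ABP step you bound $\|v\|_{L^\infty(B_r)}$ via $\|D^2w\|_{L^n(B_{r/2})}$, but ABP on $B_r$ (where $v=h-w$ vanishes on $\partial B_r$) requires the $L^n$ norm of the right-hand side over the \emph{full} ball $B_r$. Interior estimates for the constant-coefficient solution $w$ do not control $D^2w$ up to $\partial B_r$ when the Dirichlet data $h|_{\partial B_r}$ is merely bounded. The standard fix, and what \cite{cabre} actually does, is to establish the approximation step $\|h-w\|_{L^\infty}\le\epsilon$ on a slightly smaller ball by a compactness (normal-families) argument rather than by a direct ABP computation; once that approximation lemma is in hand, your iteration with the choice of $\theta$ and $\delta_0$ goes through verbatim and yields the stated $C^{1,1/2}$ estimate.
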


The second tool is a gradient estimate of $G(x,y)$ for $|x|>2R_0$ and $|y|\le |x|/2$. Here $R_0(c_0,\beta)$ is a large number that satisfies the following requirement: For any $R>R_0$, let
$$a_{ij}^R(y):=a_{ij}^*(Ry),\quad \frac 12<|y|<2, \quad i,j=1,2 $$
there holds
\begin{equation}\label{est-aij}
|a_{ij}^R(y)-\delta_{ij}|\le \delta_0 \quad \mbox{ and } \quad \|a_{ij}^R(\cdot)\|_{C^{\alpha}(B_2\setminus B_{1/2})}\le 4.
\end{equation}
where $\delta_0$ is the absolute constant required in the Cordes-Nirenberg estimate. It is easy to see that (\ref{est-aij}) holds from (\ref{close-1}) for $R_0$ large that only depends on $c_0$, $\beta$ and $A$.

\begin{lem}\label{easy-lem-1}
For $|x|>2R_0$, there exists $C(\beta,c_0,A)>0$ such that
$$|D_{y}G(x,y)|\le C(\beta,c_0,A)\frac{\log |x|}{|x|},\quad \forall y\in B(0,\frac{|x|}2). $$
Here $D_{y}$ means the differentiation with respect to the component $y$.
\end{lem}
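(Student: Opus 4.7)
The plan is to bound $|D_yG(x,y_0)|$ by splitting $B(0,|x|/2)$ into a good region $\{R_0\le |y|\le |x|/2\}$, where by the choice of $R_0$ and \eqref{est-aij} the coefficients $a^*_{ij}$ are uniformly close to $\delta_{ij}$, and a bad region $\{|y|<R_0\}$, where they are not. Throughout, the Kenig-Ni estimate \eqref{green} supplies the pointwise control $|G(x,\cdot)|\le C(\log|x|+1)$ on $B(0,|x|/2)$, since $|x-y|\ge |x|/2$ there.

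In the good region I would rescale at scale $|y_0|/2$ around $y_0$ and apply Cordes-Nirenberg. Setting $\tilde w(\zeta):=G(x,y_0+\zeta|y_0|/2)$ for $\zeta\in B_1$, the rescaled coefficients $\tilde a_{ij}(\zeta)=a^*_{ij}(y_0+\zeta|y_0|/2)$ satisfy $|\tilde a_{ij}-\delta_{ij}|\le \delta_0$ by \eqref{est-aij}, because $|y_0+\zeta|y_0|/2|\ge |y_0|/2\ge R_0/2$. The source at $x$ lies outside $B_1$ in the rescaled variable, since $|y_0+\zeta|y_0|/2-x|\ge |x|/4$. Lemma \ref{cordes} then gives $\|D\tilde w\|_{L^\infty(B_{1/2})}\le C\|\tilde w\|_{L^\infty(B_1)}\le C\log|x|$, which after undoing the rescaling yields $|D_yG(x,y_0)|\le C\log|x|/|y_0|$. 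In particular, this already delivers the desired bound $C\log|x|/|x|$ whenever $|y_0|$ is comparable to $|x|$ (say $|y_0|\ge |x|/4$).

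The main obstacle is to extend the estimate to small $|y_0|$, where the rescaling gives only $C\log|x|/|y_0|$, too large by a factor $|x|/|y_0|$. My plan is to use the maximum principle, guided by the analogy with the Laplacian Green's function $G_0(x,y)=-\frac{1}{2\pi}\log|x-y|$, for which $|D_yG_0(x,y)|\le C/|x|$ uniformly on $B(0,|x|/2)$. Writing $G=G_0+H$, one computes $LH=-\partial_{y_i}((a^*_{ij}-\delta_{ij})\partial_{y_j}G_0)$, whose right-hand side has size $O((1+|y|)^{-3}|x|^{-1})+O((1+|y|)^{-2}|x-y|^{-2})$ on $B(0,|x|/2)$, thanks to \eqref{close-1} and the explicit form of $G_0$; the extra $|x|^{-1}$ factor is precisely the source of the improvement. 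Combining this with the good-region estimate already obtained on $\partial B(0,|x|/4)$, the maximum principle applied on $B(0,|x|/4)$ should yield an oscillation bound of the form $\|G(x,\cdot)-G(x,0)\|_{L^\infty(B(0,R_0))}\le C\log|x|/|x|$. Finally, since $a^*_{ij}$ has a fixed bounded $C^\alpha$ norm on the fixed ball $B(0,R_0+1)$, a standard interior $C^{1,\alpha}$ estimate applied to the $L$-harmonic function $G(x,\cdot)-G(x,0)$ converts this oscillation bound into the pointwise gradient bound $|D_yG(x,y_0)|\le C\log|x|/|x|$ for $|y_0|\le R_0$.

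The hard part will be making the comparison with $G_0$ quantitative in the bad region: I need to extract the $1/|x|$ decay in the oscillation of $G$ on the fixed compact set $B(0,R_0)$, which is not visible directly from Kenig-Ni's $O(\log|x|)$ bound on $|G|$ itself. This is exactly the transition, advertised in the introduction, from Cordes-Nirenberg/Kenig-Ni control on the good region to maximum-principle control on the bad region.
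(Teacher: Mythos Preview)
Your Cordes--Nirenberg step on the annulus at scale $|x|$ matches the paper's first move, but your plan for the interior is both more complicated than necessary and contains a genuine gap.

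The paper does \emph{not} compare $G$ with the Laplacian Green's function, nor does it try to control the oscillation of $G(x,\cdot)$ on a small ball. Instead it uses that the coefficients $a^*_{ij}$ are $C^1$: setting $g(y)=G(x,y)$ and $H=\partial_{y_1}g$, one differentiates $a^*_{ij}\partial_{ij}g=0$ and, after substituting $\partial_{22}g=-(a^*_{11}\partial_{11}g+2a^*_{12}\partial_{12}g)/a^*_{22}$ from the equation itself, obtains a second-order elliptic equation for $H$ of the form $a^*_{ij}\partial_{ij}H+b_i\partial_iH=0$ on $B(0,\tfrac12|x|)$, with \emph{no zeroth-order term}. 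The maximum principle then gives $\sup_{B(0,|x|/2)}|H|\le \sup_{\partial B(0,|x|/2)}|H|$, and the right-hand side is exactly the $C\log|x|/|x|$ bound already obtained by Cordes--Nirenberg on the annulus. This propagates the gradient bound all the way to the origin in one stroke, bypassing the bad region entirely.

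Your route, by contrast, stalls at the step ``the maximum principle applied on $B(0,|x|/4)$ should yield an oscillation bound $\|G(x,\cdot)-G(x,0)\|_{L^\infty(B(0,R_0))}\le C\log|x|/|x|$.'' Neither ingredient you have produces the required $1/|x|$ on the boundary $\partial B(0,|x|/4)$: the gradient bound there, integrated around the circle of circumference $\sim|x|$, gives only oscillation $O(\log|x|)$; and for $H=G-G_0$ you know $|LH(y)|\le C(1+|y|)^{-2}|x|^{-2}$ but the boundary values of $H$ are still only $O(\log|x|)$ from Kenig--Ni, so the maximum principle for $H$ yields no improvement. Equivalently, what you are implicitly assuming is an interior gradient estimate $|Dg(0)|\le C\,|x|^{-1}\|g\|_{L^\infty(B(0,|x|/2))}$ with $C$ independent of $|x|$; but a naive Schauder rescaling fails here because $\sup|Da^*_{ij}|$ is realized near the origin and does not scale away. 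The paper's trick of applying the maximum principle to $\partial_{y_1}g$ itself is precisely what makes this uniform estimate work.
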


\begin{proof}Let $g(y):=G(x,y)$ for $|y|<\frac 9{10}|x|$ and we write the equation for $g$ in $B(0,\frac 9{10}|x|)$ as
\begin{equation}\label{eq-F}
a_{ij}^*\partial_{ij}g=0,\quad \mbox{in }~~ B(0,\frac 9{10}|x|).
\end{equation}
we first estimate $|Dg|$ over $B(0,\frac 34|x|)\setminus B(0,\frac 12|x|)$.  For any fixed $y$ in this region, let
$R=\frac 1{10}|x|$ and
$$\bar{a}_{ij}^R(z):=a_{ij}^*(y+Rz),\quad g_R(z):=g(y+Rz),\quad |z|\le 1. $$
Clearly $|g_R(z)|\le C\log |x|$ by the estimate of Kenig-Ni and
 $$\bar{a}_{ij}^R(z)\partial_{z_iz_j}g_R(z)=0,\quad \mbox{in }~~ B_1. $$
By the definition of $R_0$, we have
$|\bar{a}_{ij}^R-\delta_{ij}|\le \delta_0$ where $\delta_0$ is small enough for Lemma \ref{cordes} to be applied. Using $|g_R(z)|\le C\log |x|$ and Lemma \ref{cordes} we have
$$ |D g_R(z)|\le C\log |x|, \quad z\in B_{1/2}, $$
which gives
\begin{equation}\label{out-G}
|Dg|\le \frac{C\log |x|}{|x|},\qquad \frac{9}{20}|x|\le |y|\le \frac 45|x|.
\end{equation}

Now let
$$H(y):=\partial_{1}g(y_{1},y_{2}) , \quad y=(y_{1},y_{2})\in B(0,\frac{|x|}2). $$
Differentiating (\ref{eq-F}) with respect to $y_1$:
\begin{equation}\label{for-G}
a_{ij}^*\partial_{ij}H+\partial_1a_{11}^*\partial_1 H+2\partial_1a_{12}^*\partial_2 H+\partial_1 a_{22}^*\partial_{22}F=0, \quad \mbox{in }~~ B(0,\frac 12|x|).
\end{equation}
Using (\ref{eq-F}) again for the last term of (\ref{for-G}), we have
\begin{equation}\label{for-G-2}
\partial_{22}g=-\frac{a_{11}^*\partial_{11}g+2a_{12}^*\partial_{12}g}{a_{22}^*}.
\end{equation}
Combining (\ref{for-G}) and (\ref{for-G-2}) we have
$$
a_{ij}^*\partial_{ij}H+\left(\partial_{1}a_{11}^*-\frac{\partial_{1}a_{22}^*}{a_{22}^*}a_{11}^*\right)\partial_1H+\left(2\partial_1a_{12}^*-\frac{2a_{12}^*}{a_{22}^*}\partial_1a_{22}^*\right)\partial_2H=0 $$
in $B(0,\frac 12|x|)$. Clearly maximum principle holds for $H$ and it gives the desired bound for $H$. The estimate of $\partial_{2}g(y)$ for $y\in B(0, |x|/2)$ is similar.
Lemma \ref{easy-lem-1} is established.
\end{proof}

\begin{proof}[Proof of Proposition \ref{phi0-prop}]

The estimate of $\phi^0$ consists of two cases: $x\in B_{R_0}$ and $x\in \mathbb R^2\setminus B_{R_0}$.

First for $x\in B_{R_0}$, it is easy to use
(\ref{green}) and (\ref{f-af}) in (\ref{phi0-est}) to obtain
$$|\phi^0(x)|\le \epsilon_1 C(c_0,\beta,A),\quad \mbox{ for }~~ |x|<R_0. $$
The estimates for higher derivatives of $\phi^0$ in $B_{R_0}$ follow by standard elliptic estimate. Thus (\ref{phi0-2}) is verified in $B_{R_0}$.

For the second case: $x\in \mathbb R^2\setminus B_{R_0}$, we integrate over $E_1=B(0, |x|/2)$ and $E_2=\mathbb R^2\setminus E_1$, respectively. The integration over $E_1$ can be written as
\begin{align*}
&\left|\int_{E_1}(G(x,y)-G(x,0))(\tilde f_1- f_1)dy\right|
\le  \int_{E_1} \left|D_2 G(x,\xi)\right| \cdot |y| \cdot\left|f_1(y)-\tilde f_1(|y|)\right|dy,
\end{align*}
where $\xi$ is on the segment $oy$, because the integration of $f_1-\tilde f_1$ over $E_1$ is zero. By Lemma \ref{easy-lem-1} the integration over $E_1$ is bounded by $C(c_0,\beta,A)\epsilon_1|x|^{2-\beta_1}\log |x|$.
The integration over $E_2$ can be estimated by the rough bound of $G(x,\eta)$ and $f_1-\tilde f_1$. Then one sees easily that the bound for this part is
$C(\beta,c_0,A)\epsilon_1 |x|^{2-\beta_1}\log |x|$. Consequently for all $x\in \mathbb R^2$, we have
\begin{equation}\label{phi0-3}
|\phi^0(x)|\le C(c_0,A,\beta)\epsilon_1|x|^{2-\beta_1}\log |x|\le \frac{C(c_0,\beta,A)\epsilon_1}{ (1+|x|)^{\tau}},
\end{equation}
for $\tau\in(0,\frac{\beta}{2}-1)$. (\ref{phi0-2}) is established for $j=0$.

To prove (\ref{phi0-2}) for $j\ge 1$ and $|x|>R_0$,
we apply the following re-scaling argument:  consider
$$ \phi^0_{R}(y):=\phi^0(Ry),\quad \frac 14\le |y|\le 2,\quad R=|x|>R_0. $$
Then direct computation gives
$$\partial_i\left(a^*_{ij}(Ry)\partial_j\phi^0_{R}(y)\right)=R^2\left(f_1(Ry)-\tilde f_1(Ry)\right),\quad \mbox{in}~~B_2\setminus B_{1/4}. $$
The $C^{1}$ norm of the right hand side is $O(R^{2-\beta})$ and the coefficients $a^*_{ij}(Ry)$ is only $O(R^{-2})$ different from $\delta_{ij}$ in $C^1$ norm as well. Moreover,
 by (\ref{phi0-3}), $\left|\phi_R^0\right|\le C\epsilon_1 R^{-\tau}$ in $B_2\setminus B_{1/4}$. Thus
 standard elliptic estimate gives
\begin{align*}
 \left\|\phi^0_R\right\|_{C^{2,\alpha}(B_{3/2}\setminus B_{1/2})}
 \le &C(c_0,A,\beta) \bigg (\sup_{B_{2}\setminus B_{1/4}}\left|\phi_R^0\right|
 +\left\|R^2(f_1-\tilde f_1)(R\cdot)\right\|_{C^{\alpha}(B_{3/2}\setminus
B_{1/2})} \bigg ) \\
\le &\frac{C(c_0,A,\beta)\epsilon_1}{R^{\tau}}.
\end{align*}
Proposition \ref{phi0-prop} follows from the estimate above.
\end{proof}

\begin{rem} The use of $\tilde f_1$ is quite essential in the estimate over $E_1$. Otherwise a logarithmic term will occur.
\end{rem}

\section{Proof of Theorem \ref{thm2}}

Recall that the assumption on $d$ is
 $$d>\frac 1{2\pi}\int_{\mathbb R^2\setminus B_{r_0}}(f-1)-\frac 12 r_0^2.$$
 By choosing $\epsilon_0$ sufficiently small, depending on $r_0$ and $d$, we can extend $f$ to the whole $\mathbb R^2$ such that $f$ satisfies (\ref{af}), (\ref{af-2}) and
 $$d=\frac 1{2\pi}\int_{\mathbb R^2}(f-1).$$
By Theorem \ref{thm1} we can find $U$ to satisfy
$$
\begin{cases}
\det(D^2U)=f, \quad \mbox{ in }\quad \mathbb R^2, \\
U(x)=\frac 12|x|^2+d\log |x|+C+O(|x|^{-\sigma}),\quad |x|>1 \\
U \mbox{ is  close to a radial function }.
\end{cases}
$$
 By adding a constant to $U$ if necessary we can make
\begin{equation}\label{bry-s}
\|\phi-U\|_{C^{\alpha}(\partial B_{r_0})}\le \epsilon_1(\epsilon_0).
\end{equation}
where $\epsilon_1>0$ depends on $\epsilon_0$ and tends to $0$ as $\epsilon_0\to 0$.

Now we look for a function $u=U+h$ to satisfy
$$
\begin{cases}
\det(D^2u)=f,\quad \mbox{in}\quad \mathbb R^2\setminus B_{r_0},\\
u=\varphi,\quad \mbox{ on }\quad \partial B_{r_0}\\
u=\frac 12|x|^2+d\log |x|+O(1),\quad |x|>1.
\end{cases}
$$
Using the information of $U$ we need to find $h$ to satisfy
\begin{equation}\label{h-eq}
\begin{cases}
\partial_i(a_{ij}\partial_jh)+\det(D^2h)=0, \quad \mbox{in }\quad \mathbb R^2\setminus B_{r_0}, \\
h=\varphi-U, \quad \mbox{ on }\quad \partial B_{r_0},\\
h=O(1), \quad \mbox{in }\quad |x|>r_0.
\end{cases}
\end{equation}
where $a_{11}=U_{22}$, $a_{22}=U_{11}$, $a_{12}=-U_{12}$. Just like in the proof of Theorem \ref{thm1} we have
$$|D^m(a_{ij}(x)-\delta_{ij})|\le C|x|^{-2-m},\quad m=0,1,2. $$

For the remaining part of the proof we shall use
$$L=\partial_i(a_{ij}\partial_j)=a_{ij}(x)\partial_{x_ix_j}.  $$
  We first look for $\psi_0$ that satisfies
$$
\begin{cases}
L\psi_0=0, \quad \mbox{in }\quad \mathbb R^2\setminus B_{r_0},\\
\psi_0=\varphi-U,\quad \mbox{on }\quad \partial B_{r_0}, \\
|\psi_0|\le \epsilon_1,\quad |D^j\psi_0|\le C\epsilon_1 |x|^{-2-j},\quad j=1,2,3.
\end{cases}
$$
The function $\psi_0$ can be determined as follows: Let $y=x/|x|^2$ for $|x|>r_0$ and $|y|<r_0$. Let $\tilde \psi_0(y)=\psi_0(y/|y|^2)$. Direct computation yields
$$b_{kl}(y)\partial_{y_ky_l}\tilde \psi_0+b_k(y)\partial_{y_k}\tilde \psi_0=0, \quad \mbox{ in } \quad B_{1/r_0} $$
where
$$b_{kl}=\frac{1}{|y|^4}\frac{\partial y_k}{\partial x_i}a_{ij}(\frac{y}{|y|^2})\frac{\partial y_l}{\partial x_j}
=(\delta_{ki}-2\frac{y_ky_i}{|y|^2})a_{ij}(\frac{y}{|y|^2})(\delta_{lj}-2\frac{y_ly_j}{|y|^2}), $$
and
$$b_k(y)=a_{ij}(\frac{y}{|y|^2})\frac{2\delta_{ki}y_l-2\delta_{kl}y_i-2y_k\delta_{il}}{|y|^2}(\delta_{lj}-\frac{2y_ly_j}{|y|^2}). $$
Because of the closeness between $a_{ij}$ and $\delta_{ij}$ one verifies easily that $b_{kl}$ is uniformly elliptic in $B_{1/r_0}$ and the $C^{\alpha}$ norm of
both $b_{kl}$ and $b_k$ in $B_{1/r_0}$ is finite.

By Schauder's estimate
$$\|\tilde \psi_0\|_{C^{2,\alpha}(B_{1/r_0})}\le c_1(c_0,d,r_0)\epsilon_1. $$
Thus by the definition of $\tilde \psi_0$ and standard elliptic estimate
$$|D^m\psi_0(x)|\le C\epsilon_1 |x|^{-2-m}\quad m=0,1,2,3 \quad |x|>r_0. $$
Next we solve
$$
\begin{cases}
L\psi_1=-\det(D^2\psi_0),\quad \mbox{in }\quad |x|>r_0 \\
\psi_1=0,\quad \mbox{ on }\quad \partial B_{r_0},\quad \psi_1=O(1) \mbox{ at } \quad \infty.
\end{cases}
 $$
by the reflection method.  Using the smallness of $\psi_0$
we have
$$|D^m\psi_1(x)|\le c_1(c_1\epsilon_1)^2|x|^{-2-m}=c_1^3\epsilon_1^2|x|^{-2-m},\quad m=0,1,2,3,\quad |x|> r_0. $$
Let $h_0=\psi_0$ and $h_1=\psi_1+\psi_0$. Then it is easy to see that $h_1$ satisfies
$$L h_1+ det(D^2 h_0)=0,\quad |x|>r_0. $$

Then we move on to define
$$
\begin{cases}
L\psi_2=\det(D^2h_0)-\det(D^2h_1), \quad |x|>r_0, \\
\psi_2=0,\quad \mbox{ on }\quad \partial B_{r_0},\quad \psi_2=O(1) \mbox{ at infinity}.
\end{cases}
$$
Based on the estimates on $h_0$ and $h_1$ we have
$$|D^m\psi_2(x)|\le c_1^5\epsilon_1^3|x|^{-2-m},\quad m=0,1,2,3,\quad |x|>r_0. $$
Let $h_2=h_1+\psi_2$. Then it is easy to verify that
$$Lh_2+\det(D^2h_1)=0, \quad |x|>r_0. $$
In general we determine $\psi_k$ to satisfy
$$\left\{\begin{array}{ll}
L \psi_k=\det(D^2 h_{k-2})-\det(D^2h_{k-1}), \quad |x|>r_0, \\
\psi_k=0,\quad \mbox{ on }\quad \partial B_{r_0}, \quad \psi_k=O(1) \,\, \mbox{ at } \infty.
\end{array}
\right.
$$
For $\psi_k$ we have
$$|D^m\psi_k(x)|\le c_1^{2k+1}\epsilon_1^k|x|^{-2-m},\quad m=0,1,2,3,\quad |x|>r_0. $$
Eventually we let $h=\sum_{k=1}^{\infty}\psi_k$ and all the derivatives of $h$ are small and decay at infinity, which means $u=U+h$ is convex.

The following lemma in \cite{bao-li-z1} proves that $c$ is uniquely determined by other parameters.

\begin{lem}\label{uniquenesslem}
Let $u_1$, $u_2$ be two locally convex smooth functions on $\mathbb R^2\setminus \bar D$ where $D$ satisfies the same assumption as in Theorem \ref{thm2}. Suppose
$u_1$ and $u_2$ both satisfy
$$\left\{\begin{array}{ll}\det(D^2u)=f\mbox{ in }~~ \mathbb R^2\setminus \bar D,\\
u=\varphi,\quad \mbox{ on }~~ \partial D
\end{array}
\right.
$$ with $f$ satisfying \eqref{af} and for the same constant $d$
\begin{equation}\label{12feb8e1}
u_i(x)-\frac 12|x|^2-d\log |x|=O(1), \quad x\in \mathbb R^2\setminus \bar D,\quad i=1,2.
\end{equation}
Then $u_1\equiv u_2$.
\end{lem}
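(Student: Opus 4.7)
\medskip

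\noindent\textbf{Proof proposal for Lemma \ref{uniquenesslem}.} The plan is to subtract the two equations, realize the difference as a bounded solution of a linear uniformly elliptic equation outside $\bar D$, and then close the argument by transforming the exterior domain to a (punctured) bounded domain via a Kelvin-type change of variables, exactly as in the construction of $\tilde\psi_{0}$ above.

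\smallskip

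\noindent\emph{Step 1 (linearization).} Set $w=u_{1}-u_{2}$. For symmetric $2\times 2$ matrices one has the identity $\det M_{1}-\det M_{2}=\operatorname{tr}\bigl(\mathcal{C}(M_{1},M_{2})(M_{1}-M_{2})\bigr)$ with
\[
\mathcal{C}(M_{1},M_{2})=\int_{0}^{1}\operatorname{cof}\bigl(tM_{1}+(1-t)M_{2}\bigr)\,dt .
\]
Applying this with $M_{i}=D^{2}u_{i}$ and using $\det D^{2}u_{1}=\det D^{2}u_{2}=f$ yields
\[
A_{ij}(x)\,\partial_{ij}w=0\quad\text{in }\mathbb{R}^{2}\setminus\bar D,
\]
where $A_{ij}=\mathcal{C}(D^{2}u_{1},D^{2}u_{2})_{ij}$ is symmetric and positive definite by local convexity of $u_{1},u_{2}$. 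Using \eqref{12feb8e1}, the equation $\det D^{2}u_{i}=f$ with $f-1=O(|x|^{-\beta})$, and a standard interior rescaling of Monge-Amp\`ere (or the asymptotic expansions from \cite{bao-li-z1}), one obtains $D^{2}u_{i}(x)=I+O(|x|^{-2})$; consequently $A_{ij}(x)=\delta_{ij}+O(|x|^{-2})$ and $L:=A_{ij}\partial_{ij}$ is uniformly elliptic on the whole exterior.

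\smallskip

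\noindent\emph{Step 2 (boundary/asymptotic data).} By hypothesis $w=0$ on $\partial D$, and the matching of both the quadratic term $\tfrac12|x|^{2}$ and the logarithmic coefficient $d$ in \eqref{12feb8e1} forces $\|w\|_{L^{\infty}(\mathbb{R}^{2}\setminus\bar D)}<\infty$.

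\smallskip

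\noindent\emph{Step 3 (Kelvin transform and removable singularity).} Apply the change of variables $y=x/|x|^{2}$ and set $\tilde w(y)=w(y/|y|^{2})$. Precisely as in the computation carried out for $\tilde\psi_{0}$ in the Perron argument above, $\tilde w$ satisfies an equation of the form
\[
b_{kl}(y)\,\partial_{y_{k}y_{l}}\tilde w+b_{k}(y)\,\partial_{y_{k}}\tilde w=0
\]
on the image of $\mathbb{R}^{2}\setminus\bar D$ punctured at the origin, with $b_{kl}$ uniformly elliptic and $b_{kl},b_{k}$ of class $C^{\alpha}$ up to and including $y=0$ thanks to the decay $A_{ij}(x)-\delta_{ij}=O(|x|^{-2})$. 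The outer boundary condition becomes $\tilde w=0$, and $\tilde w$ remains bounded near the interior singular point $y=0$. The classical removable singularity result for bounded solutions of linear uniformly elliptic equations on a punctured planar domain then extends $\tilde w$ to a $C^{2}$ solution across $y=0$; the maximum principle (applied to the now-genuine boundary value problem on the bounded domain) forces $\tilde w\equiv 0$, and hence $w\equiv 0$.

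\smallskip

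\noindent\emph{Expected main obstacle.} The non-trivial ingredient is the quantitative Hessian decay $D^{2}u_{i}-I=O(|x|^{-2})$ needed in Step 1 to guarantee that the Kelvin-transformed coefficients remain bounded (and H\"older continuous) across the origin. The function-level estimate \eqref{12feb8e1} alone is insufficient and must be upgraded by combining it with the Monge-Amp\`ere equation, convexity, and an interior rescaling argument of the type used throughout Section 2. Once this is in hand, Steps 3 and 4 are routine applications of standard linear elliptic theory.
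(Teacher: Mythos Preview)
The present paper does not prove Lemma \ref{uniquenesslem}; it is quoted verbatim from the earlier work \cite{bao-li-z1} and used as a black box, so there is no in-paper proof to compare against.  I can therefore only assess your argument on its own merits and against what the authors presumably had in mind.

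Your Steps 1--3 are sound.  The linearisation $A_{ij}\partial_{ij}w=0$ with $A_{ij}=\int_{0}^{1}\operatorname{cof}(tD^{2}u_{1}+(1-t)D^{2}u_{2})\,dt$ is correct, and you rightly flag that the only nontrivial input is the Hessian decay $D^{2}u_{i}-I=O(|x|^{-2})$, which does \emph{not} follow from \eqref{12feb8e1} alone and must be imported from the exterior-asymptotics theorem of \cite{bao-li-z1}.  Once that is in hand, your Kelvin picture is consistent with the paper's own computation for $\tilde\psi_{0}$: the apparently singular factors $|y|^{-2}$ in $b_{k}$ are cancelled precisely because $a_{ij}-\delta_{ij}=O(|y|^{2})$ (indeed, for $a_{ij}=\delta_{ij}$ the first-order term vanishes identically), so $b_{kl},b_{k}$ extend across $y=0$ with enough regularity for the Gilbarg--Serrin removable singularity theorem \cite{gilbarg} and the strong maximum principle on the bounded image domain.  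The scheme closes.

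That said, once you have invoked the full asymptotics of \cite{bao-li-z1} you already know $w=(c_{1}-c_{2})+O(|x|^{-\sigma})$ and $|\nabla w|=O(|x|^{-1-\sigma})$, and then a shorter route is available (and is more in the spirit of that paper): in two dimensions the cofactor is linear, so the linearised operator can be written in divergence form $\partial_{i}(\tilde a_{ij}\partial_{j}w)=0$; testing against $w$ on $B_{R}\setminus\bar D$ and letting $R\to\infty$, the boundary term on $\partial D$ vanishes because $w=0$ there, and the one on $\partial B_{R}$ is $O(R^{-\sigma})\to 0$, yielding $\int_{\mathbb R^{2}\setminus\bar D}\tilde a_{ij}\partial_{i}w\,\partial_{j}w=0$ and hence $w\equiv 0$.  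Compared with this, your Kelvin/removable-singularity route is correct but spends extra machinery (a change of variables plus an isolated-singularity theorem) where a single integration by parts suffices.
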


Since Lemma \ref{uniquenesslem} uniquely determines the constant in the expansion, Theorem \ref{thm2} is established. $\Box$

\end{document}